\definecolor{DimGray}{rgb}{0.41, 0.41, 0.41}
\newtheorem{theorem}{Theorem}[section]
\theoremstyle{definition}
\newtheorem{definition}[theorem]{Definition}
\newtheorem{example}[theorem]{Example}
\theoremstyle{plain}
\newtheorem{proposition}[theorem]{Proposition}
\newtheorem*{theorem*}{Theorem}
\newtheorem{lemma}[theorem]{Lemma}
\newtheorem{corollary}[theorem]{Corollary}
\theoremstyle{remark}
\numberwithin{equation}{section}
\numberwithin{figure}{section}
\title[A formula for the conductor of a semimodule of some numerical semigroups]{A formula for the conductor of a semimodule of a numerical semigroup with two generators}
\author{Patricio Almir\'on}
\author{Julio-Jos\'e Moyano-Fern\'andez}
\dedicatory{Dedicated to the memory of Fernando Eduardo Torres Orihuela}
\subjclass[2010]{Primary: 20M14; Secondary: 05A19.}
\keywords{Numerical semigroup, Frobenius problem, $\Gamma$-semimodule, syzygy}
\thanks{The first author was partially supported by Spanish Goverment, Ministerios de Ciencia e Innovaci\'on y de Universidades MTM2016-76868-C2-1-P. The second author was partially supported by the Spanish Government, Ministerios de Ciencia e Innovaci\'on y de Universidades, grant PGC2018-096446-B-C22, as well as by Universitat Jaume I, grant UJI-B2018-10.}
\address{Instituto de Matemática interdisciplinar (IMI) y departamento de \'{A}lgebra, Geometr\'{i}a y Topolog\'{i}a\\
	Facultad de Ciencias Matem\'{a}ticas\\
	Universidad Complutense de Madrid\\
	28040, Madrid, Spain.}
\email{palmiron@ucm.es}
\address{Universitat Jaume I, Campus de Riu Sec, Departamento de Matem\'aticas \& Institut Universitari de Matem\`atiques i Aplicacions de Castell\'o, 12071
	Caste\-ll\'on de la Plana, Spain}
\email{moyano@uji.es}
\begin{document}

\begin{abstract}
We provide an expression for the conductor $c(\Delta)$ of a semimodule $\Delta$ of a numerical semigroup $\Gamma$ with two generators in terms of the syzygy module of $\Delta$ and the generators of the semigroup. In particular, we deduce that the difference between the conductor of the semimodule and the conductor of the semigroup is an element of $\Gamma$, as well as a formula for $c(\Delta)$ in terms of the dual semimodule of $\Delta$.
\keywords{Numerical semigroup \and Frobenius problem \and $\Gamma$-semimodule \and syzygy}
\end{abstract}
\maketitle
\section{Introduction}
\label{intro}
A classical problem in the combinatorics of natural numbers is to find a closed expression for the largest natural number that is not representable as a nonnegative linear combination of some relatively prime numbers, called the Frobenius number. This problem can be encoded in terms of getting a formula for the conductor of a numerical semigroup; it is known under the name ``Frobenius problem''.

Consider $\mathbb{N}=\{x\in \mathbb{Z}: x\geq 0\}$. A numerical semigroup $\Gamma$ is an additive sub-monoid of the monoid $(\mathbb{N},+)$ such that the greatest common divisor of all its elements is equal to $1$. The complement $\mathbb{N}\setminus \Gamma$ is therefore finite, and its elements are called gaps of $\Gamma$. Moreover, $\Gamma$ is finitely generated and it is not difficult to find a minimal system of generators of $\Gamma$, see. e.g. Rosales and Garc\'ia S\'anchez \cite{RosalesGarciaSanchez}.

The number $c(\Gamma)=\max (\mathbb{N}\setminus \Gamma)+1$ is called the conductor of $\Gamma$; in particular $c(\Gamma)-1$ is the Frobenius number of $\Gamma$. The computation of $c(\Gamma)$ for an arbitrary number of minimal generators of $\Gamma$ is NP-hard (see Ram\'irez Alfons\'in \cite{ram} for a good account of this), but there are some special cases in which a closed formula is available. For example, if $\Gamma=\alpha \mathbb{N}+\beta\mathbb{N}:=\langle \alpha , \beta \rangle$, then $c(\Gamma)=\alpha \beta -\alpha -\beta +1$. However, for a numerical semigroup with more than two generators it is not possible in general to obtain a closed polynomial formula for its conductor in terms of the minimal set of generators (see Curtis \cite{curtis}).

We are interested in subsets of $\mathbb{N}$ which have an additive structure over $\Gamma$ (in analogy with the structure of module over a ring): a $\Gamma$-semimodule is a non-empty subset $\Delta$ of $\mathbb{N}$ with $\Delta+\Gamma\subseteq \Delta$. A system of generators of $\Delta$ is a subset $\mathcal{E}$ of $\Delta$ such that $\Delta=\bigcup_{x\in \mathcal{E}} (x+\Gamma)$; it is called minimal if no proper subset of $\mathcal{E}$ generates $\Delta$. Notice that, since $\Delta\setminus \Gamma$ is finite, every $\Gamma$-semimodule is finitely generated and has a conductor
$$
c(\Delta)=\max (\mathbb{N}\setminus \Delta)+1.
$$

Motivated by the Frobenius problem, it is natural to ask for a closed formula for the conductor of a \(\Gamma\)-semimodule. The purpose of this note is to give a formula for $c(\Delta)$ in the case $\Gamma=\langle \alpha, \beta \rangle$ in terms of the generators of the semimodule of syzygies of $\Delta$, see \cite{MU1}, as well as in terms of the generators of the dual of this semimodule, see \cite{MU2}. These are the contents of our two main results, namely Theorem \ref{formula} resp.~Corollary \ref{cor:main}.

\section{Semimodules over a numerical semigroup}

Let $\Gamma$ be a numerical semigroup. This section is devoted to collect the main properties concerning $\Gamma$-semimodules. The reader is referred to \cite{RosalesGarciaSanchez} or \cite{ram} for specific material about numerical semigroups.

Every $\Gamma$-semimodule $\Delta$ has a unique minimal system of generators (see e.g. \cite[Lemma 2.1]{MU1}). Two $\Gamma$-semimodules $\Delta$ and $\Delta'$ are called isomorphic if there is an integer $n$ such that $x\mapsto x+n$ is a bijection from $\Delta$ to $\Delta'$; we write then $\Delta\cong \Delta'$. For every $\Gamma$-semimodule $\Delta$ there is a unique semimodule $\Delta' \cong \Delta$ containing $0$; such a semimodule is called normalized. 
Moreover, the minimal system of generators $\{x_0=0,\ldots , x_n\}$ of a normalized $\Gamma$-semimodule is a $\Gamma$-lean set, i.e. it satisfies that
$$
|x_i-x_j| \notin \Gamma \ \ \mbox{for~any} \ \ 0\leq i <j \leq n,
$$
and conversely, every $\Gamma$-lean set of $\mathbb{N}$ minimally generates a normalized $\Gamma$-semimo\-dule. Hence there is a bijection between the set of isomorphism classes of $\Gamma$-semimodules and the set of $\Gamma$-lean sets of $\mathbb{N}$. See  Sect. 2 in \cite{MU1} for the proofs of those statements.

There is another kind of system of generators---not minimal---for a semimodule $\Delta$ of $\Gamma$ relative to $s\in \Gamma\setminus \{0\}$: this is the set of the $s$ smallest elements in $\Delta$ in each of the $s$ classes modulo $s$, namely the set $\Delta \setminus (s+\Delta)$, and is called the Ap\'ery set of $\Delta$ with respect to $s$; we write $\mathrm{Ap}(\Delta,s)$. 

A formula for the conductor in terms of $\mathrm{Ap}(\Delta,s)$ for $s\in \Gamma\setminus \{0\}$ is easily deduced.

\begin{proposition}\label{prop:cond_Apery}
	Let $\Delta$ be a $\Gamma$-semimodule. For any $s\in \Gamma\setminus\{0\}$ we have that
	$$
	c(\Delta)-1=\mathrm{max}_{\leq_\mathbb{N}} \mathrm{Ap}(\Delta,s)-s.
	$$
\end{proposition}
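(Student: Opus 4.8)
The plan is to exploit the structure of $\Delta$ along the residue classes modulo $s$. For each $r\in\{0,1,\dots,s-1\}$ let $w_r$ denote the smallest element of $\Delta$ lying in the class of $r$ modulo $s$; such an element exists because $\mathbb{N}\setminus\Delta$ is finite, so $\Delta$ meets every residue class. By the very definition of the Ap\'ery set these are exactly its elements, that is $\mathrm{Ap}(\Delta,s)=\{w_0,\dots,w_{s-1}\}=\Delta\setminus(s+\Delta)$.

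The key structural observation, which I would establish first, is that in each residue class the semimodule is an \emph{upper interval}: since $s\in\Gamma\setminus\{0\}$ and $\Delta+\Gamma\subseteq\Delta$, we have $w_r+ks\in\Delta$ for every $k\geq 0$; conversely, any $x\in\Delta$ with $x\equiv r\pmod s$ satisfies $x\geq w_r$ by minimality, whence $x=w_r+ks$ for some $k\geq 0$. Therefore
\[
\Delta=\bigcup_{r=0}^{s-1}\{w_r+ks:k\geq 0\},
\qquad
\mathbb{N}\setminus\Delta=\bigcup_{r=0}^{s-1}\{x\in\mathbb{N}:x\equiv r\!\!\pmod s,\ x<w_r\}.
\]

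With this description the conductor is read off directly. Writing $W=\max_{\leq_\mathbb{N}}\mathrm{Ap}(\Delta,s)=\max_r w_r$, attained at some class $r^*$, I would prove two bounds. On one hand, every gap $g\in\mathbb{N}\setminus\Delta$ lies in a class $r$ with $g<w_r$ and $g\equiv w_r\pmod s$, so $g\leq w_r-s\leq W-s$. On the other hand, assuming $\Delta\neq\mathbb{N}$ one has $W\geq s$ (the class of any gap $g$ already forces $w_r\geq g+s\geq s$), so $W-s$ is a nonnegative integer congruent to $r^*$ and strictly below $w_{r^*}$, hence $W-s\notin\Delta$ is itself a gap. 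Combining the two inequalities yields $\max(\mathbb{N}\setminus\Delta)=W-s$, i.e. $c(\Delta)-1=\max_{\leq_\mathbb{N}}\mathrm{Ap}(\Delta,s)-s$.

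The argument is elementary; the only point requiring mild care is the second bound, namely verifying that the class realizing the maximal Ap\'ery value also carries the overall largest gap, so that a class with no gap above $W-s$ cannot interfere. This is settled by the remark that the largest gap in class $r$ is exactly $w_r-s$ whenever $w_r\geq s$, so that maximizing over classes commutes with subtracting $s$; the degenerate case $\Delta=\mathbb{N}$ is consistent under the convention $\max\emptyset=-1$.
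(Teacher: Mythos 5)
Your proof is correct and is essentially the argument the paper points to: the paper simply cites Lemma 3 of Brauer--Shockley and notes that the case of a general $\Gamma$-semimodule works the same way, and your residue-class decomposition of $\Delta$ modulo $s$ (each class being an upper interval above its Ap\'ery element $w_r$) is exactly that classical argument, written out in full with the boundary case $\Delta=\mathbb{N}$ handled correctly.
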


\begin{proof}
	The equality follows as in the case $\Delta=\Gamma$, see e.g.~Lemma 3 in Brauer and Shockley \cite{bs}.
\end{proof}


In this paper we will consider numerical semigroups with two generators, say $\Gamma=\langle \alpha, \beta \rangle$, with $\alpha,\beta \in \mathbb{N}$ with $\alpha < \beta $ and $\mathrm{gcd}(\alpha, \beta) = 1$.
As mentioned above, the conductor of $\Gamma$ can be expressed as $c=c(\langle \alpha,\beta \rangle)=(\alpha-1)(\beta-1)$. The gaps of $\langle \alpha, \beta \rangle$ are also easy to describe: they admit a unique representation $\alpha \beta -a\alpha -b \beta$, where $a\in \ ]0,\beta-1]\cap \mathbb{N}$ and $b\in \ ]0,\alpha-1]\cap \mathbb{N}$. This writing yields a map from the set of gaps of $\langle \alpha, \beta \rangle$ to $\mathbb{N}^2$ given by 
$$\alpha \beta -a\alpha -b \beta \mapsto (a,b),$$ which allows us to identify a gap with a lattice point in the lattice $\mathcal{L}=\mathbb{N}^2$; since the gaps are positive numbers, the point lies inside the triangle with vertices $(0,0),(0,\alpha),(\beta, 0)$. 

In the following we will use the notation
$$
e=\alpha\beta - a(e)\alpha-b(e)\beta
$$
for a gap $e$ of the semigroup $\langle \alpha,\beta \rangle$; if the gap is subscripted as $e_i$ then we write $a_i=a(e_i)$ and $b_i=b(e_i)$. 

Let us denote by \(\leq\) the total ordering in $\mathbb{N}$; sometimes we will write \(\leq_{\mathbb{N}}\) to emphasize that it is the natural ordering. In addition, we define the following partial ordering $\preceq$ on the set of gaps:

\begin{definition}\label{jorder}
	Given two gaps $e_1,e_2$ of $\langle \alpha , \beta \rangle$, we define 
	$$
	e_1 \preceq e_2 \ \ :\Longleftrightarrow \ a_1\leq a_2 \ \ \wedge \ \ b_1 \geq b_2
	$$
	and
	$$
	e_1 \prec e_2 \ \  :\Longleftrightarrow  \ a_1 < a_2 \ \ \wedge \ \ b_1 >b_2.
	$$
\end{definition}

Observe that the ordering $\preceq$ differs from the one used by the second author and Uliczka in \cite{MU,MU1,MU2}: there the gaps $e_i$ are ordered by decreasing sequence of the corresponding $a_i$.

Let $\mathcal{E}=\{0,e_1,\ldots , e_n\} \subseteq \mathbb{N}$ with gaps $e_i=\alpha\beta -a_i \alpha -b_i\beta$ of $\langle \alpha, \beta  \rangle$ for every $i=1,\ldots, n$ such that $a_1<a_2<\cdots < a_n$. Corollary 3.3 in \cite{MU1} ensures that $\mathcal{E}$ is $\langle \alpha , \beta \rangle$-lean if and only if $b_1>b_2>\cdots > b_n$. 

This simple fact leads to an identification (cf. \cite[Lemma 3.4]{MU1}) between an $\langle \alpha, \beta \rangle$-lean set and a lattice path with steps downwards and to the right from $(0,\alpha)$ to $(\beta,0)$ not crossing the line joining these two points, where the lattice points identified with the gaps in $\mathcal{E}$ mark the turns from the $x$-direction to the $y$-direction; these turns will be called ES-turns for abbreviation. Figure \ref{fig1} shows the lattice path corresponding to the $\langle 5,7 \rangle$-lean set $\{0,9,11,8\}$.

\begin{center}
	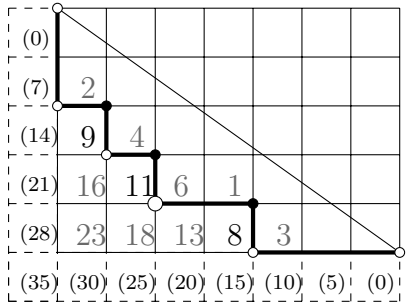
\begin{figure}
		\begin{tikzpicture}[scale=0.65]
		\draw[dashed] (-1,-1) grid [step=1cm](7,0);
		\draw[dashed] (-1,-1) grid [step=1cm](0,5);
		\draw[] (0,0) grid [step=1cm](7,5);
		\draw[] (0,5) -- (7,0);
		\draw[ultra thick] (0,5) -- (0,3) -- (1,3) -- (1,2) -- (2,2) -- (2,1) -- (4,1) -- (4,0) -- (7,0);
		
		\draw[fill] (1,3) circle [radius=0.1]; 
		\draw[fill] (2,2) circle [radius=0.1]; 
		\draw[fill] (4,1) circle [radius=0.1];

		\node [below right][DimGray] at (0.15,0.8) {$23$};
		\node [below right][DimGray] at (1.15,0.8) {$18$};
		\node [below right][DimGray] at (2.15,0.8) {$13$};
		\node [below right] at (3.25,0.8) {$8$};
		\node [below right][DimGray] at (4.25,0.8) {$3$};
		
		\node [below right][DimGray] at (0.15,1.8) {$16$};
		\node [below right] at (1.15,1.8) {$11$};
		\node [below right][DimGray] at (2.15,1.8) {$6$};
		\node [below right][DimGray] at (3.25,1.8) {$1$};
		
		\node [below right] at (0.25,2.8) {$9$};
		\node [below right][DimGray] at (1.25,2.8) {$4$};
		
		\node [below right][DimGray] at (0.25,3.8) {$2$};

		\node [below right] at (-1,-0.2) {$\scriptstyle (35)$};
		\node [below right] at (0,-0.2) {$\scriptstyle (30)$};
		\node [below right] at (1,-0.2) {$\scriptstyle (25)$};
		\node [below right] at (2,-0.2) {$\scriptstyle (20)$};
		\node [below right] at (3,-0.2) {$\scriptstyle (15)$};
		\node [below right] at (4,-0.2) {$\scriptstyle (10)$};
		\node [below right] at (5.1,-0.2) {$\scriptstyle (5)$};
		\node [below right] at (6.1,-0.2) {$\scriptstyle (0)$};
		
		\node [below right] at (-1,0.8) {$\scriptstyle (28)$};
		\node [below right] at (-1,1.8) {$\scriptstyle (21)$};
		\node [below right] at (-1,2.8) {$\scriptstyle (14)$};
		\node [below right] at (-0.95,3.8) {$\scriptstyle (7)$};
		\node [below right] at (-0.95,4.8) {$\scriptstyle (0)$};
		\draw[fill=white] (0,3) circle [radius=0.1]; 
		\draw[fill=white] (1,2) circle [radius=0.1]; 
		\draw[fill=white] (2,1) circle [radius=0.15]; 
		\draw[fill=white] (4,0) circle [radius=0.1]; 
		\draw[fill=white] (0,5) circle [radius=0.1]; 
		\draw[fill=white] (7,0) circle [radius=0.1]; 
		\end{tikzpicture}
		\caption{Lattice path for the $\langle 5,7 \rangle$-lean set $I=[0,9,11,8]$ and the corresponding syzygy minimal generators $J=[14,16,18,15]$. The biggest generator $M$ with respect to $\leq_{\mathbb{N}}$ is depicted bigger.}\label{fig1}
	\end{figure}
\end{center}

Let $g_0=0,g_1,\ldots , g_n$ be the minimal system of generators of a $\langle \alpha, \beta \rangle$-semi\-module $\Delta$. From now on, we will assume that the indexing in the minimal set of generators of $\Delta$ is such that $g_0=0\preceq g_1\preceq\cdots\preceq g_n$; accordingly we will use the notation $[g_0,\ldots, g_n]$ rather than $\{g_0,\ldots , g_n\}$. In \cite{MU1} it was introduced the notion of syzygy of $\Delta$ as the $\langle \alpha, \beta \rangle$-semimodule
$$
\mathrm{Syz}(\Delta):=\bigcup_{i,j\in \{0,\ldots n\}, i\neq j} \Big ((\Gamma + g_i)\cap (\Gamma + g_j) \Big ).
$$

The semimodule of syzygies of the semimodule $\Delta$ minimally generated by $[g_0=0,g_1,\dots,g_n]$ can be characterized as follows (see \cite[Theorem 4.2]{MU1}; since Definition \ref{jorder} differs from the corresponding in \cite{MU,MU1,MU2}---as mentioned above, Definition \ref{defin:syz} must be conveniently adapted here):
\begin{definition}\label{defin:syz}
	\begin{equation*}
	\mathrm{Syz}(\Delta) =\bigcup_{0\leq k<j\leq n} \Big ((\Gamma + g_k)\cap (\Gamma + g_j) \Big )= \bigcup_{k=0}^{n} (\Gamma + h_k),
	\end{equation*}
	
	where $h_1,\ldots , h_{n-1}$ are gaps of $\Gamma$, $h_0,h_n \leq \alpha \beta$, and
	\begin{align*}
	&h_k \equiv g_k ~\mathrm{ mod }~ \beta, ~h_ k > g_{k}  \ \mbox{for } k=0,\ldots , n \\
	&h_k \equiv g_{k+1} ~ \mathrm{mod } ~\alpha,~ h_ k > g_{k+1} \ \mbox{for } k=0,\ldots , n-1\\
	&h_n \equiv 0~\mathrm{mod } ~\alpha, \ \mathrm{and } ~h_n \geq 0
	\end{align*}
\end{definition} 

In particular, \(J=[h_0,\dots,h_n]\) is a minimal system of generators of the semimodule \(\mathrm{Syz}(\Delta)\), hence \(h_0\preceq h_1\preceq\cdots\preceq h_n.\) Therefore it is easily seen that the SE-turns of the lattice path associated to \(\Delta\) can be identified with the minimal set of generators of the syzygy module (we call SE-turns to the turns from the \(y\)--direction to the \(x\)--direction). After that, we can associate to any \(\Gamma\)-semimodule \(\Delta\) a lean set \([I,J]\), where \(I\) is a minimal set of generators of \(\Delta\) and \(J\) a minimal set of generators of \(\mathrm{Syz}(\Delta);\) or, equivalently, a lattice path. An easy consequence of this fact is the following lemma.

\begin{lemma}\label{lem:aux}
	Let \(\Delta\) be a \(\Gamma\)-semimodule with associated $\Gamma$-lean set \([I,J]\) for \(I=[g_0=0,g_1,\dots,g_n]\) and \(J=[h_0,\dots,h_n]\). Then, for any \(h\in J\) we have \(h-\alpha-\beta\notin\Delta.\)
\end{lemma}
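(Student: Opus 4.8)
The plan is to translate the statement into the lattice-path picture and to show that $h-\alpha-\beta$ lands on a lattice point lying strictly to the lower-right of the staircase associated with $\Delta$, hence outside $\Delta$. Throughout I write $g_i=\alpha\beta-a_i\alpha-b_i\beta$ for the minimal generators, so that after the chosen reindexing the lean condition gives the strict chains $a_0=0<a_1<\cdots<a_n$ and $b_0=\alpha>b_1>\cdots>b_n$, and I write $h_k=\alpha\beta-a(h_k)\alpha-b(h_k)\beta$. The first step is to locate a syzygy generator in the lattice: from $h_k\equiv g_k\ (\mathrm{mod}\ \beta)$ together with coprimality one reads off $a(h_k)=a_k$, and from $h_k\equiv g_{k+1}\ (\mathrm{mod}\ \alpha)$ one reads off $b(h_k)=b_{k+1}$, with the boundary conventions $g_0=0\leftrightarrow(0,\alpha)$ and, for $k=n$, the endpoint $(\beta,0)$ playing the role of $g_{n+1}$ (so $b_{n+1}:=0$, matching $h_n\equiv 0\ (\mathrm{mod}\ \alpha)$). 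In other words $h_k$ is precisely the inner corner sitting directly below $g_k$ and directly to the left of $g_{k+1}$, i.e. the SE-turn between two consecutive generators of the path.

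Next I would establish the membership criterion I need: for a gap $e\leftrightarrow(a,b)$ of $\Gamma$, one has $e\in\Delta$ if and only if $(a,b)$ is dominated componentwise by the lattice point of some minimal generator, that is $a\le a_i$ and $b\le b_i$ for some $i$. The implication ``dominated $\Rightarrow$ in $\Delta$'' is immediate, since then $e-g_i=(a_i-a)\alpha+(b_i-b)\beta\in\Gamma$. For the converse I would write $e=g_i+p\alpha+q\beta$ with $p,q\ge 0$ and exploit that both $e$ and $g_i$ have their coordinates confined to the rectangle $[1,\beta-1]\times[1,\alpha-1]$; the uniqueness of the gap representation $\alpha\beta-a\alpha-b\beta$ then forces $p=a_i-a\ge 0$ and $q=b_i-b\ge 0$, which is exactly the asserted domination. (Equivalently, this is the lattice-path identification of \cite[Lemma 3.4]{MU1}.)

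Finally I would apply the criterion to $e=h-\alpha-\beta$ for $h=h_k$. If $h_k-\alpha-\beta<0$ it lies outside $\mathbb{N}$, hence outside $\Delta$; the two congruences rule out $h_k-\alpha-\beta=0$. So I may assume $h_k-\alpha-\beta>0$, and a one-line estimate shows that positivity of $\alpha\beta-(a_k+1)\alpha-(b_{k+1}+1)\beta$ forces $(a_k+1,b_{k+1}+1)\in[1,\beta-1]\times[1,\alpha-1]$; thus $h_k-\alpha-\beta$ is a genuine gap sitting at the lattice point $(a_k+1,b_{k+1}+1)$. Were it dominated by a generator $g_i$ we would need $a_i\ge a_k+1$ and $b_i\ge b_{k+1}+1$; by the strict monotonicity of the $a_i$ the first inequality gives $i>k$, while by that of the $b_i$ the second gives $i\le k$, a contradiction. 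By the criterion, $h_k-\alpha-\beta\notin\Delta$.

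I expect the main obstacle to be the membership criterion of the second paragraph: passing from ``$e$ lies in some coset $\Gamma+g_i$'' to the geometric domination statement is the only place where the non-uniqueness of representations in $\Gamma$ could cause trouble, and it is precisely the confinement of the coordinates to the gap rectangle that rescues the argument. Once that criterion is available, the rest is the bookkeeping of a single diagonal step combined with the monotonicity of the lean set, which is why the authors may reasonably call this an easy consequence of the lattice-path description.
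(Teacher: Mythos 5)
Your proof is correct and follows essentially the same route as the paper's: identify $h_k$ with the lattice point $(a_k,b_{k+1})$, so that $h_k-\alpha-\beta$ corresponds to $(a_k+1,b_{k+1}+1)$, and use the uniqueness of the gap representation together with the strict monotonicity of the $a_i$ and $b_i$ to show that no minimal generator dominates that point. If anything, your closing contradiction ($a_i\ge a_k+1$ forces $i>k$ while $b_i\ge b_{k+1}+1$ forces $i\le k$) is stated more cleanly than the paper's, and you also treat the degenerate case $h-\alpha-\beta\le 0$ explicitly, which the paper leaves implicit.
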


\begin{proof}
	Consider \(h\in J\) such that that \(g_i\prec h\prec g_{i+1}\). Let us denote $(a_j,b_j)$ resp. $(a_{j+1},b_{j+1})$ the coordinates of $g_j$ resp. $g_{j+1}$ in the lattice $\mathcal{L}$; then the element \(h\) is represented in the lattice path as $(a_j,b_{j+1})$, see Definition \ref{defin:syz}. By contradiction, assume that $h-\alpha-\beta\in \Delta$; then there exists a gap \(g\in I\) together with two integers $\nu_1,\nu_2\in\mathbb{N}$ such that 
	\[
	h-\alpha-\beta=\nu_1\alpha+\nu_2\beta+g.
	\]
	Since $h-\alpha-\beta\notin\Gamma$, we may write 
	\[
	h-\alpha-\beta=\alpha\beta-(a_j+1)\alpha-(b_{j+1}+1)\beta.
	\]
	The writing of $g$ as $g=\alpha\beta-a\alpha-b\beta$ is unique whenever $(a,b)\in \mathcal{L}$, therefore
	\[
	a_j+1=a-\nu_1,\quad b_{j+1}+1=b-\nu_2.
	\]
	These equalities yield the conditions $a_j<a$ and \(b_{j+1}<b\). But the unique minimal generator which fulfills these conditions is \(g_{j+1}\); however, \(h\) cannot be expressed as \(h=g_{j+1}+\nu+\alpha+\beta\) since \(h\) is represented in the lattice path as $(a_j,b_{j+1})$, a contradiction.
\end{proof}
\begin{example}
	For \(\Gamma=\langle 5,7 \rangle\) and the \(\Gamma\)--semimodule \(\Delta_I\) minimally generated by $I=[0,9,11,8]$, it is easily deduced that the syzygy module $\mathrm{Syz}(\Delta_I)$ is minimally generated by $J=[14,16,18,15]$, cf.~Figure \ref{fig1}; there we have extended the labelling beyond the axis in the natural way in order to have also an interpretation of $J$ in terms of the lattice path.
	Observe that by Lemma \ref{lem:aux} we have \(14-7-5=2\notin \Delta,\) \(16-7-5=4\notin \Delta,\) \(18-7-5=6\notin \Delta\) and \(15-7-5=3\notin \Delta\); this can be read off from Figure \ref{fig1} as well. 
\end{example}

\section{A formula for the conductor of an $\langle \alpha, \beta \rangle$-semimodule}

In this section we are going to provide a formula for the conductor of a \(\Gamma\)-semimodule with any number of generators in terms of the generators of $\Gamma$ and a special syzygy of the $\Gamma$-semimodule. In particular, we will obtain some relations between the conductor of $\Gamma$ and the conductor of the $\Gamma$-semimodule. Finally, we will provide a formula for the conductor of the \(\Gamma\)-semimodule in terms of its dual.

\begin{theorem}\label{formula}
	Let \(\Delta\) be a \(\Gamma\)-semimodule with associated lean set \([I,J]\) as above, and let \(M:=\max_{\leq_\mathbb{N}}\{h\in J\}\) denote the biggest (with respect to the total ordering of the natural numbers) minimal generator of $\mathrm{Syz}(\Delta)$. Then
	\[c(\Delta)=M-\alpha-\beta+1.\]
	In particular, if $(m_1,m_2)$ are the coordinates of the point representing $M$ in the lattice $\mathcal{L}$, then we have
	\[c(\Delta)=c(\Gamma)-m_1\alpha-m_2\beta.\]
\end{theorem}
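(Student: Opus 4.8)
The plan is to establish the two inequalities \(c(\Delta)\ge M-\alpha-\beta+1\) and \(c(\Delta)\le M-\alpha-\beta+1\) separately; the ``in particular'' formula then follows by a one-line substitution. The lower bound is immediate: applying Lemma \ref{lem:aux} to \(h=M\in J\) gives \(M-\alpha-\beta\notin\Delta\), so \(M-\alpha-\beta\) is a gap of \(\Delta\) and therefore \(c(\Delta)-1=\max(\mathbb{N}\setminus\Delta)\ge M-\alpha-\beta\).

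For the upper bound I would work with the largest gap \(f:=c(\Delta)-1\) and show that \(f+\alpha+\beta\) is a minimal generator of \(\mathrm{Syz}(\Delta)\), whence \(f+\alpha+\beta\le M\) by the very definition of \(M\). Since \(f\) is the largest gap, both \(f+\alpha\) and \(f+\beta\) lie in \(\Delta\) while \(f\notin\Delta\). Writing \(f+\alpha=g_i+u_1\alpha+v_1\beta\) with \(u_1,v_1\ge0\), the case \(u_1\ge1\) would give \(f=g_i+(u_1-1)\alpha+v_1\beta\in\Delta\), a contradiction; hence \(u_1=0\) and \(f+\alpha=g_i+v_1\beta\). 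Symmetrically \(f+\beta=g_j+u_2\alpha\). Consequently \(f+\alpha+\beta=g_i+(v_1+1)\beta=g_j+(u_2+1)\alpha\).

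The main obstacle is to guarantee that these two expressions come from \emph{distinct} generators, i.e.\ \(i\ne j\), so that \(f+\alpha+\beta\in(\Gamma+g_i)\cap(\Gamma+g_j)\subseteq\mathrm{Syz}(\Delta)\). If \(i=j\), subtracting the two equalities yields \((u_2+1)\alpha=(v_1+1)\beta\); coprimality forces \(v_1+1=\alpha s\) and \(u_2+1=\beta s\) for some \(s\ge1\). For \(s\ge2\) one computes \(f=g_i+\bigl((\alpha s-1)\beta-\alpha\bigr)\) with the bracket \(\ge c(\Gamma)\), hence \(f\in\Gamma\subseteq\Delta\), absurd. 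For \(s=1\) one gets \(f=g_i+(\alpha\beta-\alpha-\beta)\); if \(g_i\ne0\) then \(g_i\ge1\) (a nonzero minimal generator is a positive gap of \(\Gamma\)), so again \(f\ge c(\Gamma)\) and \(f\in\Delta\), a contradiction, while \(g_i=0\) forces \(\Delta=\Gamma\), the trivial case which I would dispose of directly (there \(M=\alpha\beta\) and \(c(\Gamma)=\alpha\beta-\alpha-\beta+1\)). Thus for \(\Delta\ne\Gamma\) we have \(i\ne j\) and \(f+\alpha+\beta\in\mathrm{Syz}(\Delta)\). To see it is a \emph{minimal} generator I would verify \(f+\alpha\notin\mathrm{Syz}(\Delta)\) and \(f+\beta\notin\mathrm{Syz}(\Delta)\): a representation of \(f+\alpha\) from a second generator would again carry no \(\alpha\)-part (else \(f\in\Delta\)), forcing two minimal generators congruent modulo \(\beta\), which is impossible since a \(\Gamma\)-lean set for \(\langle\alpha,\beta\rangle\) has pairwise distinct residues modulo \(\beta\) (and modulo \(\alpha\)); the argument for \(f+\beta\) is symmetric with the roles of \(\alpha\) and \(\beta\) exchanged.

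Combining the two bounds gives \(f+\alpha+\beta=M\), that is \(c(\Delta)=M-\alpha-\beta+1\). Finally, writing \(M=\alpha\beta-m_1\alpha-m_2\beta\) for the coordinates of the point representing \(M\) and recalling \(c(\Gamma)=\alpha\beta-\alpha-\beta+1\), the formula rewrites as \(c(\Delta)=c(\Gamma)-m_1\alpha-m_2\beta\). I also expect a purely geometric reading of the same argument through the lattice path: \(f\) corresponds to the lattice point lying above the path that minimizes \(a\alpha+b\beta\), and minimality forces its left and lower neighbours to lie on or below the path, so that \(f\) sits immediately to the north-east of an SE-turn, i.e.\ of a minimal generator of \(\mathrm{Syz}(\Delta)\).
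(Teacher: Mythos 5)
Your proof is correct and reaches the statement, but the hard direction is argued along a genuinely different route than the paper's. The paper's proof of the upper bound is geometric and treats \emph{every} gap at once: a gap $\ell$ of $\Delta$ corresponds to a lattice point lying strictly above the lattice path of $[I,J]$, hence strictly to the north-east of some SE-turn $j\in J$, which immediately gives $\ell\le j-\alpha-\beta\le M-\alpha-\beta$; combined with Lemma \ref{lem:aux} (your lower bound, identical to the paper's part (i)) this finishes the proof in a few lines. You instead work only with the largest gap $f=c(\Delta)-1$ and prove the sharper arithmetic fact that $f+\alpha+\beta$ is itself a \emph{minimal} generator of $\mathrm{Syz}(\Delta)$, using the representations $f+\alpha=g_i+v_1\beta$, $f+\beta=g_j+u_2\alpha$, coprimality to rule out $i=j$, and the distinct-residues property of $\Gamma$-lean sets to get minimality. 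This costs more case analysis but avoids the lattice-path picture entirely and yields as a by-product that the maximum $M$ is always attained at $c(\Delta)-1+\alpha+\beta$; your closing ``geometric reading'' is essentially the paper's argument specialized to the maximal gap.

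One step you assert without proof: in the sub-case $i=j$, $s=1$, $g_i=0$ you claim that $f=\alpha\beta-\alpha-\beta$ forces $\Delta=\Gamma$. This is true but not automatic (equality of conductors alone would not give it): you need to observe that for any nonzero gap $g=\alpha\beta-a\alpha-b\beta$ of $\Gamma$ one has $a,b\ge 1$, hence $(\alpha\beta-\alpha-\beta)-g=(a-1)\alpha+(b-1)\beta\in\Gamma$, so if $\Delta$ had any nonzero minimal generator then $\alpha\beta-\alpha-\beta$ would lie in $\Delta$, contradicting $f\notin\Delta$. Also, in the $s\ge 2$ sub-case the conclusion should read $f\in g_i+\Gamma\subseteq\Delta$ rather than $f\in\Gamma$. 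Both are cosmetic repairs; the architecture of the argument is sound.
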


\begin{proof}
	Since \(c(\Delta)-1\) is the Frobenius number of the \(\Gamma\)--semimodule \(\Delta\), it is enough to check that (i) \(M-\alpha-\beta\notin\Delta\), and (ii) if \(\ell\notin\Delta\), then \(\ell\leq M-\alpha-\beta.\) 
	The statement (i) is clear by Lemma \ref{lem:aux}, since \(M\in J\). To see (ii), consider an element \(\ell\notin\Delta\), which in particular means \(\ell\notin\Gamma\). So we can associate to $\ell$ a point $(a,b)$ in the lattice $\mathcal{L}$. Moreover, \(\ell\) is upon and not contained in the lattice path associated to $I$. This means that there exists some \(j\in J\) with coordinates \((j_1,j_2)\) in the lattice path such that \(a> j_1\) and \(b> j_2\), otherwise \(\ell\) would be an element of \(\Delta\), since the elements represented by lattice points on and under the lattice path belong to \(\Delta\). Therefore, \(a\geq j_1+1\) and \(b\geq j_2+1\). Thus, from the representation of \(\ell\) and \(j\) as gaps we can check that
	
	\[\ell=\alpha\beta-a\alpha-b\beta\leq_{\mathbb{N}}\alpha\beta-(j_1+1)\alpha-(j_2+1)\beta=j-\alpha-\beta.\] 
	
	Hence, since \(M=\max_{\leq_\mathbb{N}}\{h\in J\}\) and  \(M\in J\), we have that \(M-\alpha-\beta\geq_{\mathbb{N}}\ell\) for any \(\ell\notin\Delta\), which proves (ii). 
	
	Finally, since $M$ can be represented as a lattice point $(m_1,m_2)\in \mathcal{L}$, we have
	\[c(\Delta)=M-\alpha-\beta+1=\alpha\beta-m_1\alpha-m_2\beta-\alpha-\beta+1=c(\Gamma)-m_1\alpha-m_2\beta.\]
\end{proof}

\begin{example}\label{example}
	Again in the case of \(\Gamma=\langle 5,7 \rangle\) and the \(\Gamma\)--semimodule minimally generated by $[0,9,11,8]$, Figure \ref{fig1} illustrates that the maximal syzygy is \(M=18\), and so the conductor of the semimodule is $c(\Gamma)-5 m_1-7 m_2 = 24-5\cdot 2 - 7 \cdot 1=7$. 
\end{example}

Notice that for the particular case of $\Delta=\Gamma$ we have $M=\alpha\beta$, and we recover the well-known formula $c(\Gamma)=\alpha\beta-\alpha-\beta +1$.
The value $M$ can be easily characterized in terms of the Ap\'ery set of $\Delta$ with respect to $\alpha+\beta$:

\begin{proposition}
	Let \(M:=\max_{\leq_\mathbb{N}}\{h\in J\}\) be the biggest minimal generator of the syzygy module with respect to the natural ordering of $\mathbb{N}$ as above, then
	$$
	M=\max_{\leq_\mathbb{N}} \mathrm{Ap}(\Delta,\alpha+\beta).
	$$
\end{proposition}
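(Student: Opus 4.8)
The plan is to obtain the identity by comparing the two expressions for the Frobenius number $c(\Delta)-1$ that are now at our disposal. Proposition \ref{prop:cond_Apery} holds for every $s\in\Gamma\setminus\{0\}$, and the particular element $s=\alpha+\beta$ lies in $\Gamma\setminus\{0\}$ because $\alpha,\beta\in\Gamma$ and both are positive. Specializing the proposition to this choice of $s$ gives
\[
c(\Delta)-1=\max_{\leq_\mathbb{N}}\mathrm{Ap}(\Delta,\alpha+\beta)-(\alpha+\beta).
\]

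On the other hand, Theorem \ref{formula} supplies the complementary expression $c(\Delta)-1=M-(\alpha+\beta)$. Equating the two right-hand sides and cancelling the common summand $-(\alpha+\beta)$ immediately yields $M=\max_{\leq_\mathbb{N}}\mathrm{Ap}(\Delta,\alpha+\beta)$, which is exactly the claim. Thus the whole argument reduces to invoking the two earlier results with the correct modulus; there is no genuine obstacle beyond the trivial verification that $\alpha+\beta\in\Gamma\setminus\{0\}$, so that Proposition \ref{prop:cond_Apery} applies.

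If one prefers a more self-contained route that does not pass through the conductor, I would instead establish the two required facts directly. First, $M$ belongs to the Apéry set: since $\mathrm{Syz}(\Delta)\subseteq\Delta$ we have $M\in\Delta$, and Lemma \ref{lem:aux} applied to $M\in J$ gives $M-\alpha-\beta\notin\Delta$, whence $M\in\Delta\setminus\big((\alpha+\beta)+\Delta\big)=\mathrm{Ap}(\Delta,\alpha+\beta)$. Second, for maximality, any $w\in\mathrm{Ap}(\Delta,\alpha+\beta)$ satisfies $w-(\alpha+\beta)\notin\Delta$, so $w-(\alpha+\beta)$ is bounded above by the Frobenius number $c(\Delta)-1=M-(\alpha+\beta)$ of $\Delta$ provided by Theorem \ref{formula}; hence $w\leq_\mathbb{N} M$. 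Combining the two facts gives $M=\max_{\leq_\mathbb{N}}\mathrm{Ap}(\Delta,\alpha+\beta)$. Either route works, and the only point worth flagging is simply that $\alpha+\beta$ is an admissible modulus for the Apéry construction.
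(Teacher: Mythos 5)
Your first argument is exactly the paper's proof: the proposition is obtained by specializing Proposition \ref{prop:cond_Apery} to $s=\alpha+\beta$ and comparing with the expression for $c(\Delta)$ from Theorem \ref{formula}. The self-contained alternative you sketch is also sound (and makes the role of Lemma \ref{lem:aux} explicit), but the intended and stated route is the first one.
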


\begin{proof}
	This is a consequence of Proposition \ref{prop:cond_Apery} for $s=\alpha+\beta \in \langle \alpha, \beta \rangle$.
\end{proof}

A straightforward consequence of Theorem \ref{formula} is the following.

\begin{corollary}
	Let \(\Delta\) be a \(\Gamma\) semimodule. Then 
	\[c(\Gamma)-c(\Delta)\in\Gamma.\]
\end{corollary}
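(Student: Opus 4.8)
The plan is to derive this directly from Theorem \ref{formula}. That theorem gives us the explicit coordinate expression $c(\Delta)=c(\Gamma)-m_1\alpha-m_2\beta$, where $(m_1,m_2)$ are the coordinates in the lattice $\mathcal{L}$ of the point representing the largest syzygy generator $M$. Rearranging immediately yields
\[
c(\Gamma)-c(\Delta)=m_1\alpha+m_2\beta,
\]
so the entire content of the corollary reduces to showing that the right-hand side lies in $\Gamma=\langle\alpha,\beta\rangle$.

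The key observation is that $m_1$ and $m_2$ are \emph{nonnegative} integers, since they are the coordinates of a lattice point $(m_1,m_2)\in\mathcal{L}=\mathbb{N}^2$. By the very definition of $\Gamma$ as the set of nonnegative integer combinations $\alpha\mathbb{N}+\beta\mathbb{N}$, any expression of the form $m_1\alpha+m_2\beta$ with $m_1,m_2\in\mathbb{N}$ is by definition an element of $\Gamma$. Hence $c(\Gamma)-c(\Delta)=m_1\alpha+m_2\beta\in\Gamma$, as claimed.

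The only point requiring a moment of care—and what I would flag as the single substantive step—is confirming that the coordinates $(m_1,m_2)$ of $M$ are genuinely nonnegative rather than merely lying in the extended labelling used in Figure \ref{fig1}. Here one must recall that $M$ is a bona fide element of the syzygy module's minimal generating set $J$ and, being the maximum with respect to $\leq_{\mathbb{N}}$, satisfies $M\geq_{\mathbb{N}} h_n\geq 0$; the representation $M=\alpha\beta-m_1\alpha-m_2\beta$ with $(m_1,m_2)\in\mathcal{L}$ then forces $m_1,m_2\in\mathbb{N}$. Once this is in place, no further argument is needed: the claim is a one-line consequence of the coordinate formula from Theorem \ref{formula} together with the definition of $\Gamma$. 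I would therefore present the proof as a short deduction, spending what little detail is warranted on the nonnegativity of the coordinates and citing Theorem \ref{formula} for everything else.
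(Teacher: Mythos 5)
Your argument is correct and is precisely the intended one: the paper states this corollary as a ``straightforward consequence'' of Theorem \ref{formula} without further proof, and the deduction $c(\Gamma)-c(\Delta)=m_1\alpha+m_2\beta\in\Gamma$ with $(m_1,m_2)\in\mathcal{L}=\mathbb{N}^2$ is exactly that consequence. Your extra care in checking the nonnegativity of $(m_1,m_2)$ is a reasonable precaution but does not change the route.
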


We conclude this paper rewriting the formula of Theorem \ref{formula} in terms of the dual \(\Gamma\)--semimodule of \(\Delta\),
\[
\Delta^{\ast}:=\{z\in\mathbb{Z}\;|\;z+\Delta\subset\Gamma\},
\]
see \cite{MU2}. An important fact about the dual semimodule is that the minimal set of generators of \(\mathrm{Syz}(\Delta)\)  is in bijection with the minimal set of generators of \(\Delta^{\ast}\):
\begin{lemma}[\cite{MU2}, Lemma 6.1]\label{lemma:61}
	The minimal sets of generators of \(\Delta^{\ast}\) and \(\mathrm{Syz}(\Delta)\) are in correspondence via the map \(x\mapsto\alpha\beta-x.\)
\end{lemma}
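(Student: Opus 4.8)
The plan is to rewrite membership in $\Delta^{\ast}$ entirely in terms of $\Delta$, exploiting that $\Gamma=\langle\alpha,\beta\rangle$ is a \emph{symmetric} numerical semigroup with Frobenius number $F=c(\Gamma)-1=\alpha\beta-\alpha-\beta$; that is, for every integer $m$ exactly one of $m$ and $F-m$ belongs to $\Gamma$. Since $z+\Delta\subseteq\Gamma$ is equivalent to $z+g_i\in\Gamma$ for every minimal generator $g_i$ of $\Delta$, applying the symmetry to each $m=z+g_i$ gives $z+g_i\in\Gamma\iff F-z-g_i\notin\Gamma$. Writing $F-z-g_i=(\alpha\beta-\alpha-\beta-z)-g_i$ and recalling $\Delta=\bigcup_i(g_i+\Gamma)$, I obtain the key equivalence
\begin{equation*}
z\in\Delta^{\ast}\iff \alpha\beta-\alpha-\beta-z\notin\Delta.\tag{$\star$}
\end{equation*}
I would establish $(\star)$ first, as it turns every statement about the dual into a statement about the gaps of $\Delta$ and, in particular, feeds directly into Lemma \ref{lem:aux}.

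With $(\star)$ available, the first half of the correspondence is immediate. For $h\in J$ put $z:=\alpha\beta-h$; then $\alpha\beta-\alpha-\beta-z=h-\alpha-\beta$, which is not in $\Delta$ by Lemma \ref{lem:aux}, so $z\in\Delta^{\ast}$. To see that $\alpha\beta-h$ is a \emph{minimal} generator I use that, since $\Gamma$ is generated by $\alpha$ and $\beta$, an element $z\in\Delta^{\ast}$ is a minimal generator if and only if $z-\alpha\notin\Delta^{\ast}$ and $z-\beta\notin\Delta^{\ast}$. Translating these two conditions through $(\star)$ converts them into $h-\beta\in\Delta$ and $h-\alpha\in\Delta$, and both hold by the congruence description in Definition \ref{defin:syz}: from $h_k\equiv g_k\bmod\beta$ with $h_k>g_k$ one gets $h_k-\beta\in g_k+\Gamma\subseteq\Delta$, and from $h_k\equiv g_{k+1}\bmod\alpha$ with $h_k>g_{k+1}$ one gets $h_k-\alpha\in g_{k+1}+\Gamma\subseteq\Delta$ (the boundary generators $h_0,h_n$ being handled by their own congruences in Definition \ref{defin:syz} in the same way). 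As $x\mapsto\alpha\beta-x$ is a bijection of $\mathbb{Z}$, this already embeds the minimal generating set $J$ of $\mathrm{Syz}(\Delta)$ injectively into the minimal generating set of $\Delta^{\ast}$.

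It remains to prove surjectivity, namely that every minimal generator $z$ of $\Delta^{\ast}$ has the form $\alpha\beta-h$ for some $h\in J$. Running the same translation in reverse, the minimal-generator conditions on $z$ become, for $h:=\alpha\beta-z$, the three requirements $h-\alpha\in\Delta$, $h-\beta\in\Delta$ and $h-\alpha-\beta\notin\Delta$; so it suffices to show that these single out exactly the minimal generators of $\mathrm{Syz}(\Delta)$. I would argue this on the lattice path: representing $h$ by its lattice point $(a,b)$, the three conditions say that $(a+1,b)$ and $(a,b+1)$ lie on or below the path while $(a+1,b+1)$ lies strictly above it, which is precisely the description of an SE-turn, and the paper has already identified the SE-turns with the minimal generators of $\mathrm{Syz}(\Delta)$. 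I expect this reverse inclusion to be the main obstacle: one has to verify that no spurious lattice point meets all three conditions---an $h$ that is covered by a single generator $g_i$ must be excluded, and this is exactly where $h-\alpha-\beta\notin\Delta$ is decisive---and the two boundary generators $h_0,h_n$ must be treated separately from the interior gaps $h_1,\dots,h_{n-1}$. Combining the injection of the second paragraph with this surjectivity yields the asserted bijection $x\mapsto\alpha\beta-x$ between the minimal generating sets of $\mathrm{Syz}(\Delta)$ and $\Delta^{\ast}$.
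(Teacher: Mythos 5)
First, note that the paper does not prove this statement at all: it is imported verbatim as Lemma 6.1 of \cite{MU2}, so there is no in-paper argument to compare yours against; your attempt has to stand on its own. Your first two paragraphs do stand: the equivalence $(\star)$ is correct (it uses that $\langle\alpha,\beta\rangle$ is symmetric, that $z+\Delta\subseteq\Gamma$ only needs to be tested on the $g_i$, and that $F-z\in\Delta$ iff $F-z-g_i\in\Gamma$ for some $i$), the characterization of minimal generators of a $\Gamma$-semimodule as the elements $z$ with $z-\alpha$ and $z-\beta$ both outside it is right, and the verification that each $h\in J$ yields a minimal generator $\alpha\beta-h$ of $\Delta^{\ast}$ via Lemma \ref{lem:aux} and the congruences of Definition \ref{defin:syz} is complete and clean.

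The genuine gap is the surjectivity half, and you flag it yourself. You reduce it correctly to: if $h-\alpha\in\Delta$, $h-\beta\in\Delta$ and $h-\alpha-\beta\notin\Delta$, then $h\in J$; but the assertion that these three conditions ``are precisely the description of an SE-turn'' is exactly the statement to be proved, not something already available --- and as stated it is delicate, because $h=\alpha\beta-z$ need not be a gap of $\Gamma$ (both $h_0$ and $h_n$ lie in $\Gamma$), so ``its lattice point $(a,b)$'' requires choosing a representative in the extended labelling before one can speak of the points $(a+1,b)$, $(a,b+1)$, $(a+1,b+1)$ relative to the path. The cleaner way to close the gap is arithmetic rather than pictorial: from $h-\alpha\in\Delta$ write $h-\alpha=g_i+p\alpha+q\beta$; the condition $h-\alpha-\beta\notin\Delta$ forces $q=0$, so $h=g_i+s\alpha$ with $s\geq 1$, and symmetrically $h=g_j+t\beta$ with $t\geq 1$. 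If $i\neq j$ then $h\in(g_i+\Gamma)\cap(g_j+\Gamma)\subseteq\mathrm{Syz}(\Delta)$, and $h-\alpha\notin\mathrm{Syz}(\Delta)$ because a second representation $h-\alpha=g_k+p'\alpha$ with $k\neq i$ would give $g_i\equiv g_k \bmod \alpha$, contradicting $\Gamma$-leanness of $I$ (likewise for $h-\beta$), so $h$ is a minimal generator of $\mathrm{Syz}(\Delta)$. If $i=j$ then $s\alpha=t\beta$ forces $h\geq g_i+\alpha\beta$, whence $h-\alpha-\beta\geq g_i+c(\Gamma)-1$, which lies in $\Delta$ unless $\Delta=\Gamma$ and $h=\alpha\beta=h_0$. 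Without some such argument (or an explicit proof that your three conditions characterize SE-turns, including the two boundary turns), the proof establishes only an injection of $J$ into the minimal generating set of $\Delta^{\ast}$, not the claimed bijection.
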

In particular, this bijection together with Theorem \ref{formula} allows us to compute the conductor of the semimodule \(\Delta\) in terms of the minimal generators of $\Delta^{\ast}$ in a natural way:
\begin{corollary}\label{cor:main}
	Let $\Delta$ be a $\Gamma$-semimodule, and let $\Delta^{\ast}$ be its dual, minimally generated by $x_0,\dots,x_n$. Then
	\[c(\Delta)=\alpha\beta-\min_{\leq\mathbb{N}}\{x_0,\ldots, x_n\}-\alpha-\beta+1.\]
\end{corollary}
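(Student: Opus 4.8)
The plan is to combine the two ingredients stated immediately before: the formula for the conductor in terms of the maximal syzygy generator (Theorem \ref{formula}) and the bijection between the minimal generators of $\mathrm{Syz}(\Delta)$ and those of $\Delta^{\ast}$ (Lemma \ref{lemma:61}). The whole argument is a translation of one statement into the other under the order-reversing map $x\mapsto \alpha\beta - x$, so essentially no new geometric content is required.

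First I would recall that by Theorem \ref{formula} we have $c(\Delta) = M - \alpha - \beta + 1$, where $M = \max_{\leq_\mathbb{N}}\{h \in J\}$ is the largest minimal generator of $\mathrm{Syz}(\Delta)$ with respect to the natural ordering. Next I would invoke Lemma \ref{lemma:61}: the map $x \mapsto \alpha\beta - x$ is a bijection from the minimal generators $\{x_0,\dots,x_n\}$ of $\Delta^{\ast}$ onto the minimal generators $J = \{h_0,\dots,h_n\}$ of $\mathrm{Syz}(\Delta)$. The key observation is that this map is strictly decreasing, so it carries minima to maxima: explicitly,
\[
M = \max_{\leq_\mathbb{N}}\{h_0,\dots,h_n\} = \alpha\beta - \min_{\leq_\mathbb{N}}\{x_0,\dots,x_n\}.
\]

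Finally I would substitute this expression for $M$ into the formula of Theorem \ref{formula}:
\[
c(\Delta) = M - \alpha - \beta + 1 = \alpha\beta - \min_{\leq_\mathbb{N}}\{x_0,\dots,x_n\} - \alpha - \beta + 1,
\]
which is exactly the claimed identity, completing the proof.

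I do not expect any serious obstacle here, since the corollary is a direct consequence of the two cited results; the only point requiring a moment's care is the elementary fact that an affine map $x \mapsto \alpha\beta - x$ with negative slope exchanges the roles of $\min$ and $\max$, which is why the maximal generator $M$ of $\mathrm{Syz}(\Delta)$ corresponds to the \emph{minimal} generator of $\Delta^{\ast}$. One might also remark for completeness that $\Delta^{\ast}$ indeed has $n+1$ minimal generators matching the $n+1$ generators of $\mathrm{Syz}(\Delta)$, but this is guaranteed by the bijection in Lemma \ref{lemma:61} and needs no separate verification.
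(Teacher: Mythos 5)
Your proposal is correct and follows exactly the same route as the paper: apply Theorem \ref{formula} to write $c(\Delta)=M-\alpha-\beta+1$, then use Lemma \ref{lemma:61} to identify $M=\alpha\beta-\min_{\leq_\mathbb{N}}\{x_0,\ldots,x_n\}$ and substitute. The only difference is that you make explicit the (elementary) observation that the order-reversing bijection exchanges $\min$ and $\max$, which the paper leaves implicit.
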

\begin{proof}
	By Theorem \ref{formula} we have that \(c(\Delta)=\max_{\leq_\mathbb{N}}\{h\in J\}-\alpha-\beta+1, \) where \(J\) is a minimal set of generators of \(\mathrm{Syz}(\Delta).\) Lemma \ref{lemma:61} yields the equality 
	$$
	\min_{\leq\mathbb{N}}\{x_0, x_1,\ldots, x_n\}=\alpha\beta-\max_{\leq_\mathbb{N}}\{h\in J\},
	$$ which allows us to conclude.
\end{proof}

\begin{example}
	By \cite[Theorem 2.5]{MU2}, the minimal generators of the dual of the $\langle 5,7 \rangle$-semimodule $\Delta_I$ are given by $[20,17,19,21]$; notice that, for the explicit calculation, the mentioned theorem requires the reverse ordering $\succeq$ instead of the ordering $\preceq$ we use here. The minimum of this set is $17$, therefore by Corollary \ref{cor:main} we have $c(\Delta)=35-17-12+1=7$, as computed in Example \ref{example}.
\end{example}


\begin{thebibliography}{}
\bibitem{bs} A. Brauer, J.E. Shockley, On a problem of {F}robenius, J.~reine und angewandte~Math. \textbf{211} (1962), 215--220.
	
	
	\bibitem{curtis} F. Curtis, On formulas for the {F}robenius number of a numerical semigroup, Math. Scand. \textbf{67}, no. 2 (1990), 190--192.
	
		\bibitem{MU} J.J.~Moyano-Fern\'andez, J. Uliczka, Hilbert depth of graded modules over polynomial rings in two variables, J. Algebra \textbf{373} (2013), 130--152.
	
	\bibitem{MU1} J.J.~Moyano-Fern\'andez, J. Uliczka, Lattice paths with given number of turns and numerical semigroups, Sem. Forum \textbf{88}, no. 3, (2014), 631--646.
	
	\bibitem{MU2} J.J.~Moyano-Fern\'andez, J. Uliczka, Duality and syzygies for semimodules over numerical semigroups, Sem. Forum \textbf{92}, no. 3, (2016), 675--690.
	
	\bibitem{ram} J. L. Ram\'{\i}rez Alfons\'{\i}n, The {D}iophantine {F}robenius problem, Oxford Lecture Series in Mathematics and its Applications \textbf{30}, Oxford University Press, Oxford (2005)
	
	
	\bibitem{RosalesGarciaSanchez} J. C. Rosales, P. A. Garc\'ia Sanchez, \emph{Numerical Semigroups}, Springer, New York (2009).\end{thebibliography}
\end{document}